\newcommand{\vertiii}[1]{{\left\vert\kern-0.25ex\left\vert\kern-0.25ex\left\vert #1 
    \right\vert\kern-0.25ex\right\vert\kern-0.25ex\right\vert}}
\newcommand{\braciii}[1]{{\left[\kern-0.25ex\left[\kern-0.25ex\left[ #1 
		\right]\kern-0.25ex\right]\kern-0.25ex\right]}}
\newcommand{\norm}[1]{\left\lVert#1\right\rVert}
\newcommand{\ZZ}{\mathbb{Z}}
\theoremstyle{plain}
\newtheorem{theorem}{Theorem}[section]
\newtheorem{prop}[theorem]{Proposition}
\newtheorem{lemma}[theorem]{Lemma}
\newtheorem{cor}[theorem]{Corollary}
\newtheorem{mydef}[theorem]{Definition}
\begin{document}
\title{Coboundaries of nonconventional ergodic averages}
    \author{I. Assani}
\thanks{Department of Mathematics, UNC Chapel Hill, NC 27599, assani@email.unc.edu}
 \begin{abstract}
 Let $(X,\mathcal{A}, \mu)$ be a probability measure space and let $T_i,$ $1\leq i\leq H,$ be  invertible bi measurable measure preserving transformations on this measure space. We give a sufficient condition for  the product of $H$ bounded functions $f_1, f_2, ..., f_H$ to be a coboundary.  This condition turns out to be also necessary  when one seeks bounded coboundaries.
 \end{abstract} \maketitle

  \section{Introduction}

The purpose of this short article is to answer a question brought to our attention by S. Donoso (\footnote{He indicated that this question was mentioned to him by J.P. Conze and Y. Kifer}) during the 2017 ETDS workshop held at Chapel Hill. \\
To this end we refine the setting in \cite{Ass2}.
\begin{mydef}
A probability measure preserving system  $(X, \mathcal{F}, \mu, T_1, T_2, \ldots, T_H)$ is a combination of a probability measure space $(X, \mathcal{F}, \mu)$ and $T_i, 1\leq i\leq H$ bi-measurable  invertible measure preserving maps acting on this probability space. 
\end{mydef}

 Given a probability measure preserving system $(X, \mathcal{F}, \mu, T_1, T_2, \ldots, T_H)$, $\mu_\Delta$ is the diagonal measure on $X^H$, $\Phi = T_1 \times T_2 \times \cdots \times T_H$, and $\nu$ is the diagonal-orbit measure of $\Phi$, i.e.
\[\nu(A) = \frac{1}{3} \sum_{n \in \ZZ} \frac{1}{2^{|n|}} \mu_\Delta(\Phi^{-n}A). \] 	
We note that $\nu$ is nonsingular, since $\frac{1}{3} \nu(A) \leq \nu(\Phi^{-1}A) \leq 2\nu(A).$.
\begin{mydef} 
The diagonal orbit system of the probability measure preserving system $(X, \mathcal{F}, \mu, T_1, T_2, \ldots, T_H)$ is the system $(X^H, \mathcal{F}^H, \nu, \Phi).$
\end{mydef} 
\noindent{\bf Remarks}
 \begin{enumerate}
 \item The maps $T_i$ do not necessarily commute.
 \item The nonsingularity of $\Phi$ with respect to $\nu$ implies the following simple but \bf{key lemma} (This lemma does not seem to hold when one replaces $\nu$  with the diagonal measure $\mu_{\Delta}$ on $\big(X^H, \mathcal{F}^H),$ defined by  the equation $\int F(x_1,x_2, \ldots,x_H) d\mu_{\Delta} = \int F(x,x,\ldots, x) d\mu$ \big). 
 \end{enumerate}
 \begin{lemma}\label{L1}
 Let $F_n$ be a sequence of measurable functions defined on $X^H.$ If $F_n$ converges $\nu$ a.e. then 
 the sequence $G_n= F_n\circ\Phi $ converges $\nu$ a.e. as well. 
 \end{lemma}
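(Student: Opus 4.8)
The plan is to reduce the $\nu$-a.e.\ convergence of $(G_n)$ to that of $(F_n)$ by locating their respective divergence sets and then transporting nullity through $\Phi^{-1}$ by means of the nonsingularity estimate recorded just above the statement. Only the upper bound $\nu(\Phi^{-1}A)\le 2\,\nu(A)$ will be needed.

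First I would fix the divergence set of the given sequence. Writing $D$ for the set of points at which $(F_n)$ fails to converge,
\[
D = \Big\{x \in X^H : \big(F_n(x)\big)_n \text{ does not converge}\Big\},
\]
the Cauchy criterion (the $F_n$ being scalar-valued) gives
\[
X^H \setminus D = \bigcap_{k\ge 1}\ \bigcup_{N\ge 1}\ \bigcap_{m,\ell \ge N} \Big\{x : |F_m(x) - F_\ell(x)| < \tfrac1k\Big\},
\]
so $D$ is $\mathcal{F}^H$-measurable as a countable combination of the measurable sets $\{|F_m - F_\ell| < 1/k\}$. By hypothesis $(F_n)$ converges $\nu$-a.e., i.e.\ $\nu(D) = 0$.

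Second, and this is the whole point, I would identify the divergence set of $(G_n)$. Since $G_n(x) = F_n(\Phi x)$, the sequence $\big(G_n(x)\big)_n$ converges exactly when $\big(F_n(\Phi x)\big)_n$ does, that is, precisely when $\Phi x \notin D$; as $\Phi$ is bi-measurable the set on which $(G_n)$ fails to converge is the measurable set $\Phi^{-1}(D)$. It then remains only to show $\nu(\Phi^{-1}D) = 0$, and here I would apply the nonsingularity estimate with $A = D$: this yields $\nu(\Phi^{-1}D) \le 2\,\nu(D) = 0$, so $(G_n)$ converges $\nu$-a.e.

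I do not expect a genuine obstacle; the content of the lemma is the identification of the divergence set of $(G_n)$ as $\Phi^{-1}(D)$ together with the fact that $\nu$ was constructed precisely so that $\Phi$ is nonsingular and hence sends $\nu$-null sets to $\nu$-null sets under $\Phi^{-1}$. This also accounts for the parenthetical remark: for the bare diagonal measure $\mu_\Delta$ the map $\Phi = T_1 \times \cdots \times T_H$ carries the diagonal off itself, so $\mu_\Delta$ and $\mu_\Delta\circ\Phi^{-1}$ are mutually singular and the estimate $\nu(\Phi^{-1}A)\le 2\,\nu(A)$ has no analogue; one may then arrange $(F_n)$ to diverge only on the image $\Phi(\Delta)$ of the diagonal, which is $\mu_\Delta$-null, while forcing $(G_n)$ to diverge on $\Delta$ itself, where $\mu_\Delta$ has full mass.
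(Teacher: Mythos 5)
Your proof is correct and is essentially the paper's own argument: both identify the divergence set of $(G_n)$ as $\Phi^{-1}$ of the divergence set of $(F_n)$ and then invoke the nonsingularity of $\Phi$ with respect to $\nu$ (in your case via the explicit bound $\nu(\Phi^{-1}A)\le 2\nu(A)$) to transport nullity. The added details on measurability via the Cauchy criterion and the closing remark about $\mu_\Delta$ are fine elaborations but do not change the route.
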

 \begin{proof}
   Let $A = \{z\in X^H; F_n(z)\, \text{converges}\} $ and $ B=\{z\in X^H; G_n(z)\, \text{converges}\}.$ 
   We have $B = \Phi^{-1}(A). $ Therefore if $\nu(A^c) = 0$  we have $\nu(\Phi^{-1}(A^c)) = 0$ by the non singularity of $\nu.$
 \end{proof}
We wish to prove the following proposition.
\begin{prop}\label{coboundary}
	Let $(X, \mathcal{F}, \mu, T_1, \ldots, T_H)$ be a measure preserving system, and $f_1, f_2, \ldots f_H \in L^\infty(\mu)$ and $1\leq p <\infty.$ 
	\begin{enumerate}
		\item  Let us assume that the supremum of the nonconventional ergodic sums is $L^p$-bounded, i.e.
		\[ \sup_{N} \norm{\sum_{n= 1}^N \prod_{i=1}^H f_i \circ T_i^{n}  }_{L^p(\nu)} < \infty. \]
		\item Then the product of the functions is a coboundary in $L^p(X^H, \nu)$, i.e. if $\Phi = T_1 \times T_2 \times \cdots \times T_H$, there exists $V\in L^p(X^H, \nu)$ such that
		\[\bigotimes_{i=1}^H f_i = V - V \circ \Phi,  \text{$\nu$-a.e. }  
		 \]
		Therefore, for $\mu_{\Delta}$-a.e. $(x_1, x_2,...,x_H) \in X^H$, we have
		\[f_1(x_1)f_2(x_2) \cdots f_H(x_H) = V(x_1, x_2, \ldots, x_H) - V(T_1x_1, T_2x_2, \ldots, T_Hx_H). \]
	\end{enumerate}
\end{prop}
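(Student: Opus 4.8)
The plan is to recast the statement as a coboundary problem for a single bounded operator and then manufacture the transfer function $V$ from the boundedness of the partial sums. Write $F=\bigotimes_{i=1}^H f_i$, which lies in $L^\infty(\nu)\subset L^p(\nu)$ since $\nu$ is a probability measure and the $f_i$ are bounded, and observe that $F\circ\Phi^n=\prod_{i=1}^H f_i\circ T_i^n$, so the hypothesis becomes $\sup_N\norm{\sum_{n=1}^N F\circ\Phi^n}_{L^p(\nu)}=:C<\infty$. The nonsingularity bound $\tfrac13\nu(A)\le\nu(\Phi^{-1}A)\le 2\nu(A)$ shows that $U\colon g\mapsto g\circ\Phi$ is a bounded, boundedly invertible operator on $L^p(\nu)$, with $\tfrac13\norm{g}_p^p\le\norm{Ug}_p^p\le 2\norm{g}_p^p$. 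The goal is then to produce $V\in L^p(\nu)$ with $(\Id-U)V=F$, i.e. $V-V\circ\Phi=F$ $\nu$-a.e.

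The engine of the argument is a double Ces\`aro averaging that turns bounded partial sums into an approximate coboundary. Set $\tilde S_k=\sum_{n=0}^{k-1}U^nF=F+\sum_{n=1}^{k-1}U^nF$ and $\tilde V_N=\frac1N\sum_{k=1}^N \tilde S_k$. A one-line telescoping gives $(\Id-U)\tilde S_k=F-U^kF$, and averaging over $k$ yields the key identity
\[(\Id-U)\tilde V_N=F-\frac1N\sum_{k=1}^N U^kF .\]
Since $\norm{\sum_{k=1}^N U^kF}_p\le C$, the right-hand side converges to $F$ in $L^p(\nu)$, while $\norm{\tilde V_N}_p\le\norm{F}_p+C=:C'$ uniformly in $N$ (because $\tilde S_k=F+\sum_{n=1}^{k-1}U^nF$). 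Thus $(\tilde V_N)$ is bounded in $L^p(\nu)$ and its images under $\Id-U$ converge to $F$; what remains is to pass to a limit in which the left-hand side converges as well.

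To extract a genuine limit I would run an a.e.-convergence argument valid for all $1\le p<\infty$ simultaneously. Since $\nu$ is a probability measure, $\norm{\cdot}_1\le\norm{\cdot}_p$, so $(\tilde V_N)$ is bounded in $L^1(\nu)$; by Koml\'os' theorem there is a subsequence whose Ces\`aro averages $W_J=\frac1J\sum_{j=1}^J \tilde V_{N_j}$ converge $\nu$-a.e. to some $V$, and Fatou's lemma keeps $V\in L^p(\nu)$ with $\norm{V}_p\le C'$. Averaging the key identity over this subsequence gives $(\Id-U)W_J=F-\varepsilon_J$ with $\norm{\varepsilon_J}_1\le\frac1J\sum_{j=1}^J C/N_j\to 0$, so after passing to a further subsequence we may also assume $\varepsilon_J\to0$ $\nu$-a.e. (For $1<p<\infty$ one may shortcut all of this by reflexivity: a weakly convergent subsequence $\tilde V_{N_j}\rightharpoonup V$ already satisfies $(\Id-U)V=F$ by weak continuity of $\Id-U$.)

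The last step, and the conceptually essential one, is to take the a.e. limit in $W_J-W_J\circ\Phi=F-\varepsilon_J$. The term $W_J$ converges $\nu$-a.e. to $V$ by construction, but $W_J\circ\Phi=UW_J$ must be controlled separately, and this is exactly where the key Lemma~\ref{L1} enters: $\nu$-a.e. convergence of $W_J$ forces $\nu$-a.e. convergence of $W_J\circ\Phi$, necessarily to $V\circ\Phi$. Letting $J\to\infty$ then yields $V-V\circ\Phi=F$ $\nu$-a.e., the desired coboundary relation. Finally, since $\nu\ge\tfrac13\mu_\Delta$ we have $\mu_\Delta\ll\nu$, so the $\nu$-a.e. identity restricts to the diagonal, where $V\circ\Phi(x,\dots,x)=V(T_1x,\dots,T_Hx)$, producing the stated $\mu_\Delta$-a.e. equation. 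I expect the main obstacle to be precisely this limit passage in the non-reflexive case $p=1$: there is no weak-compactness shortcut, and because $\Phi$ preserves $\nu$ only up to the nonsingularity constants, the usual mean and pointwise ergodic theorems are unavailable, so the transfer of a.e. convergence across $\Phi$ has to be carried out by hand via Lemma~\ref{L1}.
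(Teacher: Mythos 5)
Your proposal is correct and takes essentially the same route as the paper: your $\tilde V_N$ is precisely the paper's $D_{N}$, and both proofs push the bounded partial sums through a double Ces\`aro average, extract an a.e.-convergent sequence of averages via Koml\'os' lemma (with the same reflexivity shortcut noted for $1<p<\infty$), transfer the a.e. convergence across $\Phi$ by Lemma~\ref{L1}, and descend to the diagonal via $\mu_\Delta \ll \nu$. The only variations---operator notation, controlling the error term $\varepsilon_J$ after the Koml\'os extraction rather than fixing an a.e.-null subsequence $\frac{1}{N_k}\sum_{n=1}^{N_k}F\circ\Phi^n$ beforehand, and the explicit Fatou step keeping $V \in L^p(\nu)$ for general $p$---are cosmetic refinements, not a different argument.
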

We give only the proof for the case $p=1.$   We use the following a.e.-convergence result obtained by Koml\'os in 1967. When $1<p<\infty$ the reflexivity of $L^p(\nu)$ allows to bypass this lemma.  
For $p = \infty$ the assumptions (1) and (2) in the statement of  Theorem \ref{coboundary} are equivalent . We state it separately as a corollary.
\begin{lemma}[\cite{Komlos}]\label{Komlos}
	Let $(X, \mathcal{F}, \mu)$ be a probability measure space, and $(g_n)$ be a sequence in $L^1(\mu)$. Assume that $\liminf_n \norm{g_n}_{L^1(\mu)} < \infty$. Then there exists a subsequence $(g_{n_k})_k$ and a function $g \in L^1(\mu)$ such that for $\mu$-a.e. $x \in X$, 
	\[\lim_{K \to \infty} \frac{1}{K} \sum_{k=1}^K g_{n_k}(x) = g(x). \]
\end{lemma}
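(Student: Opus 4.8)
The plan is to prove Komlós' theorem by splitting the sequence into a uniformly integrable part and a ``singular'' part with essentially disjoint supports, establishing almost-everywhere Cesàro convergence for each piece, and recombining. First I would reduce to the $L^1$-bounded case: since $\liminf_n \norm{g_n}_{L^1(\mu)} < \infty$, there is a subsequence along which the norms stay below some $M < \infty$, which I relabel as $(g_n)$ so that $\sup_n \norm{g_n}_{L^1(\mu)} \le M$. Because every subsequence extracted hereafter is a subsequence of the original, it suffices to produce the desired a.e.\ limit for this bounded sequence.

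Next I would invoke the subsequence splitting lemma, obtained by a disjointification/stopping-time argument applied to the level sets $\{|g_n| > c\}$: there is a subsequence $(g_{n_k})$ and a decomposition $g_{n_k} = y_k + z_k$ in which $(y_k)$ is uniformly integrable and the $z_k$ have pairwise disjoint supports. The disjoint part is then immediate: for $\mu$-a.e.\ $x$ at most one $z_k(x)$ is nonzero, so $\sum_{k=1}^K z_k(x)$ is eventually constant in $K$ and hence $\frac{1}{K}\sum_{k=1}^K z_k \to 0$ a.e. Everything nontrivial is therefore concentrated in the uniformly integrable part $(y_k)$.

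For that part I would pass to a weak limit. By the Dunford--Pettis theorem $(y_k)$ is relatively weakly compact in $L^1(\mu)$, so along a further subsequence $y_k \rightharpoonup g$ weakly in $L^1$ with $g \in L^1(\mu)$; set $w_k = y_k - g$. To reach a Hilbert-space setting I truncate at levels $c_m \uparrow \infty$, writing $w_k = w_k^{(m)} + r_k^{(m)}$ with $w_k^{(m)}$ bounded (hence in $L^2$) and $\norm{r_k^{(m)}}_{L^1(\mu)}$ small uniformly in $k$ by uniform integrability. On the bounded pieces I extract greedily so that the centered terms are nearly orthogonal, $|\langle w_j^{(m)}, w_k^{(m)}\rangle| \le 2^{-k}$ for $j < k$, which forces $\norm{\frac{1}{K}\sum_{k \le K} w_k^{(m)}}_{L^2(\mu)}^2 \le C/K + o(1)$. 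Summing over the dyadic times $K = 2^\ell$ gives a convergent series, so Chebyshev and Borel--Cantelli yield a.e.\ convergence to $0$ along $K = 2^\ell$, and a Rademacher--Menshov maximal estimate on each block $[2^\ell, 2^{\ell+1})$ fills the gaps to give full a.e.\ Cesàro convergence of $(w_k^{(m)})$. A diagonal argument over $m$, together with Borel--Cantelli control of the small tails $r_k^{(m)}$, then upgrades this to $\frac{1}{K}\sum_{k \le K} w_k \to 0$ a.e., i.e.\ $\frac{1}{K}\sum_{k \le K} y_k \to g$ a.e.; adding the disjoint part produces $\frac{1}{K}\sum_{k=1}^K g_{n_k} \to g$ a.e.\ with $g \in L^1(\mu)$.

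The main obstacle is this final almost-everywhere upgrade. Near-orthogonality together with a Banach--Saks argument delivers $L^2$-norm convergence of the Cesàro means almost for free, but almost-everywhere convergence demands control of the maximal oscillation of the partial averages. This is precisely where the dyadic blocking and the Rademacher--Menshov (or a Doob-type) maximal inequality are needed, and where the greedy choice of the inner-product gaps $2^{-k}$ must be coordinated with the truncation levels $c_m$ so that all error terms remain summable; getting these two extraction schemes to cohere is the delicate point of the argument.
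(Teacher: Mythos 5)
First, note that the paper itself offers no proof of this lemma: it is quoted verbatim with a citation to Koml\'os (1967), so there is no internal argument to match and any correct proof you supply is necessarily ``a different route.'' Your outline follows the standard modern proof rather than Koml\'os's original martingale-flavored argument: reduction to an $L^1$-bounded subsequence, the Kadec--Pe\l czy\'nski/Rosenthal subsequence splitting into a uniformly integrable part plus disjointly supported part, disposal of the disjoint part by the eventually-constant-sums observation, truncation into $L^2$, almost-orthogonal extraction, and dyadic blocking with a Rademacher--Menshov maximal estimate. All of those components are sound, and you correctly target only the statement actually used here (existence of one good subsequence), which is weaker than Koml\'os's full theorem, in which every further subsequence Ces\`aro-converges to the same limit.

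There is, however, a genuine gap at the near-orthogonality step: the greedy bound $|\langle w_j^{(m)}, w_k^{(m)}\rangle| \le 2^{-k}$ for $j<k$ is not achievable as written, because truncation does not preserve the weak limit. You know $w_k \rightharpoonup 0$ weakly in $L^1$, and since $w_j^{(m)} \in L^\infty$ this does give $\langle w_j^{(m)}, w_k\rangle \to 0$; but $\langle w_j^{(m)}, w_k^{(m)}\rangle$ differs from this by $\langle w_j^{(m)}, r_k^{(m)}\rangle$, which is only bounded by $c_m \sup_k \norm{r_k^{(m)}}_{L^1(\mu)} =: c_m\epsilon_m$, and uniform integrability makes $\epsilon_m \to 0$ while saying nothing about $c_m\epsilon_m$. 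Equivalently: along a further subsequence $w_k^{(m)} \rightharpoonup h^{(m)}$ weakly in $L^2$, so the inner products converge to $\langle w_j^{(m)}, h^{(m)}\rangle$, which need not be small. The standard repair is to re-center: extract so that each truncated sequence has a weak $L^2$ limit $h^{(m)}$, run your almost-orthogonality and maximal-inequality argument on $w_k^{(m)} - h^{(m)}$, and then note that testing against $\phi \in L^\infty$ and using $w_k \rightharpoonup 0$ in $L^1$ yields $\norm{h^{(m)}}_{L^1(\mu)} \le \epsilon_m \to 0$, so the centers contribute nothing in the limit; finally choose truncation levels so the tail norms are summable (for instance a diagonal truncation $c_k \uparrow \infty$ with $\sum_k \norm{w_k - w_k^{(k)}}_{L^1(\mu)} < \infty$, so the tails vanish a.e.\ by monotone convergence), which is exactly the coordination you flagged as delicate but did not execute. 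With the re-centering inserted and the summable diagonal truncation spelled out, your plan does close.
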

	\begin{proof}[Proof of Proposition $\ref{coboundary}$]
		We show that the  techniques used for an invariant measure can be applied  to our current  nonsingular setting. 
		The assumption made tells us that, if $F = f_1 \otimes f_2 \otimes \cdots \otimes f_d$, we have
		\[ \lim_{N \to \infty} \norm{\frac{1}{N} \sum_{n=1}^N F \circ \Phi^n}_{L^1(\nu)} = 0. \]
		Therefore, there exists a subsequence $N_k$ of natural numbers such that 
		\[\lim_{N_k \to \infty} \frac{1}{N_k} \sum_{n=1}^{N_k} F \circ \Phi^n(z) = 0 \]
		for $\nu$-a.e. $z \in X^H$. Because $\Phi$ is nonsingular with respect to $\nu$, we also know that
		\[\lim_{N_k \to \infty} \frac{1}{N_k} \sum_{n=1}^{N_k} F \circ \Phi^{n+1}(z) = 0 \]
		for $\nu$-a.e. $z \in X^H$ for the same subsequence $(N_k)$. Set 
		\[D_{N_k} = \frac{1}{N_k} \sum_{n=1}^{N_k} \sum_{j=0}^{n-1} F \circ \phi^j. \]
		Since $\sup_n\norm{\sum_{j=0}^{n-1} F \circ \phi^j}_{L^1(\nu)} < \infty$, $\liminf_k \norm{D_{N_k}}_{L^1(\nu)} < \infty$. Thus, we may apply Lemma $\ref{Komlos}$ to show that there exists a subsequence of $(D_{N_k})$ (which remain denoted as $(D_{N_k})$) such that the averages
		$\frac{1}{K} \sum_{k=1}^K D_{N_k}$
		converge $\nu$-a.e to a function $V \in L^1(\nu)$. Similarly, by Lemma \ref{L1} we have
		\[ V \circ \Phi(z) =  \lim_K \frac{1}{K} \sum_{k=1}^K D_{N_k} \circ \Phi(z) \]
		for $\nu$-a.e. $z \in X^H$. Therefore,
		\begin{align*}
		V - V \circ \Phi &= \lim_{K} \left( \frac{1}{K} \sum_{k=1}^K F - \frac{1}{K} \sum_{k=1}^K \left(\frac{1}{N_k} \sum_{n=1}^{N_k} F \circ \Phi^n \right) \right) = F.
		\end{align*}
		Since $V - V\circ \Phi = F$ for $\nu$-a.e., the equality certainly holds for $\mu_\Delta$-a.e. (the construction of $\nu$ guarantees that $V \in L^1(\mu_\Delta)$). 	
	\end{proof}
	
	\begin{cor} Let $(X, \mathcal{F}, \mu, T_1, \ldots, T_H)$ be a measure preserving system, and $f_1, f_2, \ldots f_H \in L^\infty(\mu)$ 
	The following statements are equivalent. 
	
	\begin{enumerate}
		\item The supremum of the nonconventional ergodic sums is $L^\infty$-bounded, i.e.
		\[ \sup_{N} \norm{\sum_{n= 1}^N \prod_{i=1}^H f_i \circ T_i^{n}  }_{L^\infty(\nu)} < \infty. \]
		\item The product of the functions is a coboundary in $L^\infty(X^H, \nu)$, i.e. if $\Phi = T_1 \times T_2 \times \cdots \times T_H$, there exists $V\in L^\infty(X^H, \nu)$ such that
		\[\bigotimes_{i=1}^H f_i = V - V \circ \Phi,  \text{$\nu$-a.e. }  
		 \]
		Therefore, for $\mu$-a.e. $x \in X$, we have
		\[f_1(x)f_2(x) \cdots f_H(x) = V(x, x, \ldots, x) - V(T_1x, T_2x, \ldots, T_Hx). \]
	\end{enumerate}
   \end{cor}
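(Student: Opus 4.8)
The plan is to prove the two implications separately, and the key observation that makes everything go is that $\nu$ is a probability measure: taking $A = X^H$ in its definition gives $\nu(X^H) = \frac{1}{3}\sum_{n\in\ZZ} 2^{-|n|} = 1$, so $L^\infty(\nu) \subseteq L^1(\nu)$ with $\norm{g}_{L^1(\nu)} \leq \norm{g}_{L^\infty(\nu)}$. Throughout I write $F = \bigotimes_{i=1}^H f_i$, so that $F\circ\Phi^n(x_1,\ldots,x_H) = \prod_{i=1}^H f_i(T_i^n x_i)$ and the sums appearing in $(1)$ are the partial sums $S_N = \sum_{n=1}^N F\circ\Phi^n$.

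The implication $(2)\Rightarrow(1)$ is a telescoping argument. Suppose $F = V - V\circ\Phi$ holds $\nu$-a.e.\ with $V\in L^\infty(\nu)$, say off a $\nu$-null set $E$. By the nonsingularity of $\Phi$ (the estimate $\frac13\nu(A)\leq\nu(\Phi^{-1}A)\leq 2\nu(A)$), each $\Phi^{-n}E$ is again $\nu$-null, so $F\circ\Phi^n = V\circ\Phi^n - V\circ\Phi^{n+1}$ holds $\nu$-a.e.\ for every $n$. Summing collapses the series, giving $S_N = V\circ\Phi - V\circ\Phi^{N+1}$ $\nu$-a.e., whence $\norm{S_N}_{L^\infty(\nu)} \leq 2\norm{V}_{L^\infty(\nu)}$ uniformly in $N$, which is precisely $(1)$.

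The content lies in $(1)\Rightarrow(2)$. Since the $L^\infty(\nu)$ bound in $(1)$ dominates the corresponding $L^1(\nu)$ bound, Proposition \ref{coboundary} already supplies a function $V\in L^1(\nu)$ with $F = V - V\circ\Phi$ $\nu$-a.e.; the task is only to show this $V$ is bounded. Rather than build a new function, I would reuse the very $V$ produced in the proof of Proposition \ref{coboundary}, where it arises as a $\nu$-a.e.\ limit of iterated Cesàro averages $\frac{1}{K}\sum_{k=1}^K D_{N_k}$ with $D_{N_k} = \frac{1}{N_k}\sum_{n=1}^{N_k}\sum_{j=0}^{n-1} F\circ\Phi^j$, and then track the $L^\infty$ bound through this construction. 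Setting $M = \sup_N \norm{S_N}_{L^\infty(\nu)} < \infty$, each inner partial sum satisfies $\sum_{j=0}^{n-1}F\circ\Phi^j = F + S_{n-1}$, so $\norm{\sum_{j=0}^{n-1}F\circ\Phi^j}_{L^\infty(\nu)} \leq \norm{F}_{L^\infty(\nu)} + M$. Averaging only preserves this bound, so both $\norm{D_{N_k}}_{L^\infty(\nu)}$ and $\norm{\frac{1}{K}\sum_{k=1}^K D_{N_k}}_{L^\infty(\nu)}$ are at most $C := \norm{F}_{L^\infty(\nu)} + M$ for all indices.

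The one delicate point is then to pass this uniform $L^\infty$ bound to the $\nu$-a.e.\ limit. This is where care with null sets is needed, but it is benign: if $g_K \to V$ $\nu$-a.e.\ and each $|g_K| \leq C$ off a $\nu$-null set $E_K$, then off the $\nu$-null set $\bigcup_K E_K$ we get $|V| \leq C$, so $V\in L^\infty(\nu)$ with $\norm{V}_{L^\infty(\nu)} \leq C$. This establishes $(2)$. Finally, the pointwise statement follows because the $n=0$ term in the definition of $\nu$ gives $\nu(A) \geq \frac{1}{3}\mu_\Delta(A)$, so every $\nu$-null set is $\mu_\Delta$-null; hence $F = V - V\circ\Phi$ holds $\mu_\Delta$-a.e., and evaluating along the diagonal $x\mapsto(x,\ldots,x)$ yields $f_1(x)\cdots f_H(x) = V(x,\ldots,x) - V(T_1x,\ldots,T_Hx)$ for $\mu$-a.e.\ $x$.
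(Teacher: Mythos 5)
Your proof is correct and takes essentially the same route as the paper: $(2)\Rightarrow(1)$ by telescoping the coboundary identity, and $(1)\Rightarrow(2)$ by rerunning the construction in the proof of Proposition \ref{coboundary} and passing the uniform $L^\infty(\nu)$ bound on the averages $D_{N_k}$ (and their Ces\`aro means) to the $\nu$-a.e.\ limit $V$. You are in fact slightly more careful than the paper on two minor points: the telescoped sum is $V\circ\Phi - V\circ\Phi^{N+1}$ (the paper writes $V - V\circ\Phi^{N+1}$), and since the inner sums defining $D_{N_k}$ include the $j=0$ term, the right bound is $\norm{F}_{L^\infty(\nu)} + M$ rather than the hypothesis constant alone.
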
 
   \begin{proof}
     The implication 1) implies 2) can be obtained by following  the same path as in the proof of the previous proposition. The only thing to check is that $ V \in L^{\infty}(\nu)$. But this follows from the fact that   
     if \[ \sup_{N} \norm{\sum_{n= 1}^N \prod_{i=1}^H f_i \circ T_i^{n}  }_{L^\infty(\nu)} < C<\infty \] then 
     $\| D_{N_k}\|_{L^{\infty}} \leq C.$  From this observation one can conclude that the limit function $V$ is also in $L^{\infty}(\nu)$.\\
     For the reverse implication, if  $F= f_1\times f_2...\times f_H $ is  a coboundary in $L^{\infty}(\nu)$ (i.e. $F = V - V\circ \Phi$ where $V\in L^{\infty}(\nu)$ )  then 
     $$\sum_{n =1}^N F\circ \Phi^n = V - V\circ \Phi ^{N+1}$$  and 
     $$\| \sum_{n =1}^N F\circ \Phi^n \|_{L^{\infty}(\nu)} = \|V - V\circ \Phi ^{N+1}\|_{L^{\infty}(\nu)} \leq 2 \| V \|_{L^{\infty}(\nu)}$$
   \end{proof}
    \noindent{\bf Remark}\\
     The assumption  \[ \sup_{N} \norm{\sum_{n= 1}^N \prod_{i=1}^H f_i \circ T_i^{n}  }_{L^p(\nu)} < \infty. \] is satisfied  when \[\sup_{N, m\in \ZZ}\norm{\sum_{n= 1}^N \prod_{i=1}^H f_i \circ T_i^{n +m}  }_{L^p(\mu)} < \infty. \]
    This last condition is equivalent to 
       \[\sup_{N\in \ZZ}\norm{\sum_{n= 0}^N \prod_{i=1}^H f_i \circ T_i^{n}  }_{L^p(\mu)} < \infty. \] which may be easier to check in the applications.\\

\end{document}